\newtheorem{theo}{Theorem}[section]
\newtheorem*{theo*}{Theorem}
\newtheorem{defi}[theo]{Definition}
\newtheorem{exam}[theo]{Example}
\newtheorem{note}[theo]{Note}
\newcommand{\N}{\mathbb{N}}
\title{ On a Ramanujan type expansion of arithmetical functions }
\begin{document}
 \keywords{Ramanujan sum, Ramanujan expansions; arithmetical functions; Cohen-Ramanujan sum; additive functions}
 \subjclass[2010]{11A25, 11L03}
 
 \author[A Chandran]{Arya Chandran}
 \address{Department of Mathematics, University College, Thiruvananthapuram, Kerala - 695034, India}
 \email{aryavinayachandran@gmail.com}

\author[K V Namboothiri]{K Vishnu Namboothiri}
\address{Department of Mathematics, Government College, Chittur, Palakkad - 678104, INDIA\\Department of Collegiate Education, Government of Kerala, India}
\email{kvnamboothiri@gmail.com}

 \begin{abstract}
 Srinivasa Ramanujan provided series expansions of certain arithmetical functions in terms of the exponential sum defined by $c_r(n)=\sum\limits_{\substack{{m=1}\\(m,r)=1}}^{r}e^{\frac{2 \pi imn}{r}}$ in  [\emph{Trans. Cambridge Philos. Soc, 22(13):259–276, 1918}]. Here we give similar type of expansions in terms of the Cohen-Ramanujan sum defined by E. Cohen in [\emph{Duke Mathematical Journal, 16(85-90):2, 1949}] by $c_r^s(n)=\sum\limits_{\substack{h=1\\(h,r^s)_s=1}}^{r^s}e^{\frac{2\pi i n h}{r^s}}$. We also provide some necessary and sufficient conditions for such expansions to exist. 
 \end{abstract}

 \maketitle
\section{Introduction}
The Ramanujan sum denoted by $c_r(n)$ is defined to be the sum of certain powers of a primitive $r$th root of unity. That is,
\begin{align}
c_r(n)&=\sum\limits_{\substack{{m=1}\\(m,r)=1}}^{r}e^{\frac{2 \pi imn}{r}}
\end{align}
 where $r\in\mathbb{N}$ and $n \in \mathbb{Z}$. This sum appeared for the first time in  a paper of Ramanujan \cite{ramanujan1918certain} where he discussed series expansions of certain arithmetical functions in terms of these sums. These expansions were pointwise convergent. The series expansions he gave there were of the form
 \begin{align}\label{ramsum}
 g(a) = \sum\limits_{\substack{{r=1}}}^{\infty}\widehat{g}(r)c_r(a),
 \end{align} with suitable coefficients $\widehat{g}(r)$. In particular, he gave expansions like 
\begin{align*}
 d(n) = \sum\limits_{\substack{{r=1}}}^{\infty} \frac{\log r}{r}c_r(n)
 \end{align*}
 and
 \begin{align*}
 \sigma(n) = \frac{\pi^2n}{6}\sum\limits_{\substack{{r=1}}}^{\infty} \frac{c_r(n)}{r^2}
\end{align*}
 where $d(n)$ and $\sigma(n)$ are respectively the number of divisors and  the sum of divisors of $n$. Though Ramanujan gave series expansions of some functions, no necessary or sufficient conditions were given by him to understand for what type of functions such an expansion may exist. Attempts on this direction were made by others later.

For an arithmetical function $g$, its mean value is defined by $M(g) = \lim\limits_{\substack{x\rightarrow \infty}}\frac{1}{x} \sum\limits_{\substack{{n \leq x}}} g(n)$, when the limit exists. Carmichael \cite{carmichael1932expansions} proved the following identity  for the Ramanujan sums which helps us to  write down possible candidates for the Ramanujan coefficients of any given arithmetical function.
\begin{theo}[Orthogonality Relation]
$$\lim\limits_{\substack{x\rightarrow \infty}}\frac{1}{x} \sum\limits_{\substack{{n \leq x}}}c_r(n)c_s(n) =  \begin{cases}\phi(r) , \quad\text{ if } r=s\\
	0, \quad\text{ otherwise.}
	\end{cases}$$
	
\end{theo}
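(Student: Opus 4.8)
The plan is to use the periodicity of $n\mapsto c_r(n)$ to replace the Cesàro average by an average over a single period, and then to evaluate that finite sum by multiplying out the two defining exponential sums and counting.

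First I would record the routine lemma that if an arithmetical function $f$ has period $L$, then its mean value exists and $M(f)=\tfrac1L\sum_{n=1}^{L}f(n)$, which follows from $\sum_{n\le x}f(n)=\lfloor x/L\rfloor\sum_{n=1}^{L}f(n)+O(1)$. Since $c_r(n)$ depends only on the residue of $n$ modulo $r$, the product $c_r(n)c_s(n)$ is periodic with period $L:=\operatorname{lcm}(r,s)$; hence the limit in the statement exists and equals $\tfrac1L\sum_{n=1}^{L}c_r(n)c_s(n)$.

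Next, substituting the definition $c_r(n)=\sum_{1\le a\le r,\,(a,r)=1}e^{2\pi i an/r}$ (and the analogous one for $c_s$) and interchanging the finite sums gives
\[
\sum_{n=1}^{L}c_r(n)c_s(n)=\sum_{\substack{1\le a\le r\\(a,r)=1}}\ \sum_{\substack{1\le b\le s\\(b,s)=1}}\ \sum_{n=1}^{L}e^{2\pi i n\left(\frac ar+\frac bs\right)}.
\]
Because $r\mid L$ and $s\mid L$, the exponent satisfies $L\bigl(\tfrac ar+\tfrac bs\bigr)\in\Z$, so the inner geometric sum equals $L$ when $\tfrac ar+\tfrac bs\in\Z$ and $0$ otherwise. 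Therefore $M(c_rc_s)$ equals the number $N(r,s)$ of pairs $(a,b)$ with $1\le a\le r$, $1\le b\le s$, $(a,r)=(b,s)=1$ and $\tfrac ar+\tfrac bs\in\Z$.

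Finally I would carry out this count. If $\tfrac ar+\tfrac bs=k\in\Z$, then $as+br=krs$; reducing modulo $r$ and using $(a,r)=1$ forces $r\mid s$, and reducing modulo $s$ and using $(b,s)=1$ forces $s\mid r$, so $N(r,s)=0$ unless $r=s$. When $r=s$ the condition becomes $a+b\equiv 0\pmod r$, and for each of the $\phi(r)$ admissible values of $a$ there is exactly one $b\in\{1,\dots,r\}$ with $b\equiv-a\pmod r$, which automatically satisfies $(b,r)=1$; hence $N(r,r)=\phi(r)$. This yields the claimed value $\phi(r)$ when $r=s$ and $0$ otherwise. The argument is essentially mechanical; the only steps needing a moment's care are the vanishing of the geometric sum (which is exactly where $L=\operatorname{lcm}(r,s)$ is used) and the divisibility argument ruling out solutions when $r\neq s$.
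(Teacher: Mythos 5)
Your proof is correct. Note, however, that the paper does not prove this statement at all: it is quoted in the introduction as a known result of Carmichael, with a citation in place of a proof, so there is no in-paper argument to compare against. Your argument is the classical one and is complete: the periodicity of $c_r(n)c_s(n)$ modulo $L=\operatorname{lcm}(r,s)$ legitimately reduces the Ces\`aro limit to an average over one period (the $O(1)$ error depends only on the fixed $r,s$, so it vanishes after dividing by $x$); the inner geometric sum $\sum_{n=1}^{L}e^{2\pi i n(a/r+b/s)}$ is correctly evaluated as $L$ or $0$ because $e^{2\pi i(a/r+b/s)}$ is an $L$-th root of unity; and the counting step is sound, since $as+br\equiv 0\pmod{r}$ with $(a,r)=1$ forces $r\mid s$ and symmetrically $s\mid r$, while for $r=s$ the map $a\mapsto r-a$ gives the unique admissible $b$ with $(b,r)=(a,r)=1$, yielding exactly $\phi(r)$ pairs. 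One cosmetic remark: many textbook treatments phrase the computation with $c_r(n)\overline{c_s(n)}$ and the condition $\tfrac ar-\tfrac bs\in\Z$, using that Ramanujan sums are real; your version with $\tfrac ar+\tfrac bs$ is equally valid and needs no reality argument, so nothing is lost.
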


By applying the orthogonality relation to ($\ref{ramsum}$), we get 
$\widehat{g}(r) = \frac{M(gc_r)}{\phi(r)}$
 provided the mean value of $gc_r$ exists. Thus Ramanujan expansions exist for those arithmetical functions for which the mean values $M(gc_r)$ exist.
    Wintner proved \cite{hardy1943eratosthenian} later the following sufficient condition for the existence of the mean values.
\begin{theo}
Suppose that $g(n)=\sum\limits_{\substack{{d \mid n}}} f(d) $, and that $\sum\limits_{\substack{{n=1}}}^{\infty} \frac{|f(n)|}{n} < \infty$. Then $M(g)= \sum\limits_{\substack{{n=1}}}^{\infty} \frac{f(n)}{n} $.
\end{theo}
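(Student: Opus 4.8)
The plan is to compute $\sum_{n\le x} g(n)$ by interchanging the order of summation, then divide by $x$ and let $x\to\infty$, using the absolute convergence hypothesis to kill the error terms that arise from the variable range of summation.

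First I would use $g(n)=\sum_{d\mid n} f(d)$ and group the pairs $(d,n)$ with $d\mid n$ and $n\le x$ according to the value of $d$, which gives
\[
\sum_{n\le x} g(n)=\sum_{n\le x}\sum_{d\mid n} f(d)=\sum_{d\le x} f(d)\Big\lfloor \frac{x}{d}\Big\rfloor .
\]
Dividing by $x$ and writing $\lfloor x/d\rfloor = x/d + O(1)$, more precisely using $\bigl|\lfloor x/d\rfloor - x/d\bigr|<1$, this becomes
\[
\frac{1}{x}\sum_{n\le x} g(n)=\sum_{d\le x}\frac{f(d)}{d}+E(x),\qquad |E(x)|\le \frac{1}{x}\sum_{d\le x}|f(d)| .
\]
So there are two things to establish: that $\sum_{d\le x} f(d)/d\to\sum_{d=1}^\infty f(d)/d$ as $x\to\infty$, which is immediate since the series converges (absolutely, by hypothesis), and that $E(x)\to 0$.

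To handle $E(x)$ I would fix $\varepsilon>0$ and, using $\sum_n |f(n)|/n<\infty$, choose $N$ with $\sum_{d>N}|f(d)|/d<\varepsilon$. Then for $x>N$,
\[
\frac{1}{x}\sum_{d\le x}|f(d)|=\frac{1}{x}\sum_{d\le N}|f(d)|+\frac{1}{x}\sum_{N<d\le x}|f(d)|
\le \frac{1}{x}\sum_{d\le N}|f(d)|+\sum_{N<d\le x}\frac{|f(d)|}{d},
\]
where in the last step I used $1/x\le 1/d$ for $d\le x$. The first term tends to $0$ as $x\to\infty$, being a fixed finite sum divided by $x$, and the second is $<\varepsilon$; hence $\limsup_{x\to\infty}|E(x)|\le\varepsilon$ for every $\varepsilon>0$, so $E(x)\to 0$. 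Combining the two facts yields $M(g)=\lim_{x\to\infty}\frac{1}{x}\sum_{n\le x}g(n)=\sum_{d=1}^\infty f(d)/d$.

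The interchange of summation is routine; the only real point is the estimate for $E(x)$, and the key observation that makes it go through is that for $d\le x$ one has $1/x\le 1/d$, which lets the tail of the convergent series $\sum_d |f(d)|/d$ absorb the contribution of the ``large'' divisors $d\in(N,x]$ uniformly in $x$. This is precisely where the hypothesis $\sum_n |f(n)|/n<\infty$ is needed, rather than mere convergence of $\sum_n f(n)/n$; alternatively one could phrase this step as an appeal to Kronecker's lemma to conclude $\frac{1}{x}\sum_{d\le x}|f(d)|\to 0$ directly.
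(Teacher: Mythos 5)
Your argument is correct: the interchange $\sum_{n\le x}g(n)=\sum_{d\le x}f(d)\lfloor x/d\rfloor$, the splitting off of $\sum_{d\le x}f(d)/d$, and the $\varepsilon$--$N$ estimate showing $\frac{1}{x}\sum_{d\le x}|f(d)|\to 0$ (using $1/x\le 1/d$ on the tail) are all sound, and this is exactly the standard proof of Wintner's theorem. Note, however, that the paper does not prove this statement at all --- it quotes it as a known background result with a citation --- so there is no internal proof to compare against; your write-up simply supplies the classical argument that the paper takes for granted.
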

 
Delange \cite{delange1976ramanujan}  improved the above result and proved the following giving another sufficient condition for the Ramanujan expansions to exist. 
\begin{theo}
Suppose that $g(n) = \sum\limits_{\substack{{d \mid n}}} f(d)$, and that $\sum\limits_{\substack{{n=1}}}^{\infty} 2^{w(n)}\frac{|f(n)|}{n} < \infty$, where $w(n)$ is the number of distinct prime divisors of $n$. Then $g$ admits a Ramanujan expansion with $\widehat{g}(q)=\sum\limits_{\substack{{n=1}}}^{\infty}\frac{f(qm)}{qm} $.
\end{theo}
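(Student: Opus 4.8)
The plan is to establish the displayed formula for $\widehat{g}$ by carrying out the formal computation
\[
\sum_{q=1}^{\infty}\widehat{g}(q)\,c_q(n)
=\sum_{q=1}^{\infty}c_q(n)\sum_{m=1}^{\infty}\frac{f(qm)}{qm}
=\sum_{k=1}^{\infty}\frac{f(k)}{k}\sum_{q\mid k}c_q(n)
=\sum_{k\mid n}f(k)=g(n),
\]
and then supplying exactly the input needed to legitimise it. Two facts must be proved: the inner identity $\sum_{q\mid k}c_q(n)=k$ when $k\mid n$ and $0$ otherwise; and the absolute convergence of the double series $\sum_{q,m}\frac{f(qm)}{qm}\,c_q(n)$. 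The latter is what simultaneously shows that each coefficient $\widehat{g}(q)=\sum_{m\ge 1}f(qm)/(qm)$ is a well-defined (absolutely convergent) series and that the regrouping of the double series according to the value $k=qm$ is valid.

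\emph{Step 1: the divisor identity.} Starting from the classical formula $c_q(n)=\sum_{d\mid (q,n)}d\,\mu(q/d)$, I would interchange the two finite summations in $\sum_{q\mid k}c_q(n)$ and substitute $q=de$ with $e\mid k/d$, obtaining $\sum_{d\mid (k,n)}d\sum_{e\mid k/d}\mu(e)$. By the defining property of $\mu$ the inner sum equals $1$ when $d=k$ and $0$ otherwise, so only the term $d=k$ can contribute; it is present precisely when $k\mid (k,n)$, i.e.\ when $k\mid n$, and then the total is $k$.

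\emph{Step 2: the majorant.} This is the crux, and where the weight $2^{w(\cdot)}$ enters. Since $q\mapsto c_q(n)$ is multiplicative, so is $k\mapsto\sum_{q\mid k}|c_q(n)|$, and it is enough to bound $\sum_{j=0}^{a}|c_{p^j}(n)|$ for a prime power $p^a$. Using the standard evaluation of $c_{p^j}(n)$ --- namely $\phi(p^j)$ if $p^j\mid n$, $\;-p^{\,j-1}$ if $p^{\,j-1}\,\|\,n$, and $0$ if $p^{\,j-1}\nmid n$ --- a short telescoping computation gives $\sum_{j=0}^{a}|c_{p^j}(n)|\le 2\,p^{\min(a,\,v_p(n))}$. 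Taking the product over the primes dividing $k$ yields the estimate
\[
\sum_{q\mid k}|c_q(n)|\ \le\ 2^{w(k)}\,(k,n)\ \le\ n\,2^{w(k)}.
\]
The cruder bound $|c_q(n)|\le (q,n)\le n$ would only give $\sum_{q\mid k}|c_q(n)|\le n\,d(k)$, and $d(k)$ can be far larger than $2^{w(k)}$; this is exactly why the hypothesis is formulated with $2^{w(n)}$ rather than with the divisor function.

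\emph{Steps 3--4: assembling the proof.} Combining Step 2 with the hypothesis,
\[
\sum_{q,m\ge 1}\frac{|f(qm)|}{qm}\,|c_q(n)|
=\sum_{k\ge 1}\frac{|f(k)|}{k}\sum_{q\mid k}|c_q(n)|
\ \le\ n\sum_{k\ge 1}2^{w(k)}\frac{|f(k)|}{k}<\infty ,
\]
so the double series converges absolutely; in particular each $\widehat{g}(q)$ is well defined and $\sum_q\widehat{g}(q)c_q(n)$ converges absolutely. Summing the double series first over $m$ for fixed $q$ reproduces $\sum_q\widehat{g}(q)c_q(n)$, while grouping the same terms according to $qm=k$ gives $\sum_k\frac{f(k)}{k}\sum_{q\mid k}c_q(n)$, which by Step 1 collapses to $\sum_{k\mid n}f(k)=g(n)$. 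Because absolute convergence forces the two arrangements to have equal sum, we conclude $g(n)=\sum_{q}\widehat{g}(q)c_q(n)$ for every $n$, i.e.\ $g$ has the asserted Ramanujan expansion. I expect Step 2 to be the only genuine obstacle: the remaining ingredients are a classical identity and routine manipulations of absolutely convergent series, whereas the bound $\sum_{q\mid k}|c_q(n)|\ll_n 2^{w(k)}$ requires the prime-power bookkeeping and is precisely where the strength of the hypothesis is consumed.
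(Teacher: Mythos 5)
Your argument is correct, but there is nothing in the paper to compare it with: the statement is Delange's theorem, quoted in the introduction as background with a citation to Delange (1976), and the paper gives no proof of it (its own proofs concern the Cohen--Ramanujan analogues later on). What you have written is essentially the classical argument: the divisor identity $\sum_{q\mid k}c_q(n)=k$ for $k\mid n$ and $0$ otherwise, the prime-power estimate $\sum_{j=0}^{a}\lvert c_{p^j}(n)\rvert\le 2\,p^{\min(a,v_p(n))}$ combined with multiplicativity to get $\sum_{q\mid k}\lvert c_q(n)\rvert\le 2^{w(k)}(k,n)\le n\,2^{w(k)}$, and then absolute convergence of the double series to justify regrouping by $k=qm$; all of these steps check out, and Step 2 is indeed exactly where the $2^{w(n)}$ hypothesis is consumed. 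One small touch-up: do not deduce the well-definedness of the coefficients $\widehat{g}(q)$ from the weighted double series, since for a fixed $n$ one may have $c_q(n)=0$, in which case that series says nothing about $\sum_{m}\lvert f(qm)\rvert/(qm)$; instead note that $2^{w(k)}\ge 1$ already gives $\sum_{m}\lvert f(qm)\rvert/(qm)\le\sum_{k}\lvert f(k)\rvert/k<\infty$ directly from the hypothesis. (You also correctly read the paper's misprinted index $n$ in the coefficient formula as $m$.)
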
 

For a detailed discussion on the above results, please see \cite[Chapter VIII]{schwarz1994arithmetical}. Later, Lucht  \cite{lucht1995ramanujan}  gave an alternate method to compute the Ramanujan coefficients.
\begin{theo}{\cite[Theorem 1]{lucht1995ramanujan} }\label{lucht1}
Let $\widehat{g}: \mathbb{N}\rightarrow \mathbb{C}$ be an arbitrary arithmetical function and $\mu$ the usual M{\"o}bius function. The following are equivalent.
\begin{enumerate}
\item $g(a)= \sum\limits_{\substack{{r=1}}}^{\infty} \widehat{g}(r) c_r(a)$ converges (absolutely) for every $a \in \mathbb{N}$.
\item $\gamma(a)= a \sum\limits_{\substack{{r=1}}}^{\infty} \widehat{g}(ar) \mu(r)$ converges (absolutely) for every $a \in \mathbb{N}$.
\end{enumerate}
\end{theo}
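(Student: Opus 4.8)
The engine of the proof is the classical evaluation $c_r(n)=\sum_{d\mid(r,n)}\mu(r/d)\,d$. Substituting it into the defining sum and interchanging the two summations (legitimate when the outer sum is truncated, since both sums are then finite), the $N$-th partial sum of the Ramanujan expansion becomes
\begin{equation*}
\sum_{r=1}^{N}\widehat{g}(r)\,c_r(a)=\sum_{d\mid a}d\sum_{m\le N/d}\widehat{g}(dm)\,\mu(m),
\end{equation*}
a \emph{finite} sum, over the divisors $d$ of $a$, of truncations of the series $\sum_{m}\widehat{g}(dm)\mu(m)=\gamma(d)/d$. The plan is to isolate this identity -- together with its majorant $\sum_{r\le N}|\widehat{g}(r)c_r(a)|\le\sum_{d\mid a}d\sum_{m\le N/d}|\widehat{g}(dm)\mu(m)|$, which follows from $|c_r(n)|\le\sum_{d\mid(r,n)}d$ -- as a preliminary lemma; the rest is essentially a bookkeeping consequence of it, with the Möbius pair $g(a)=\sum_{d\mid a}\gamma(d)$, $\gamma(a)=\sum_{d\mid a}\mu(a/d)g(d)$ falling out along the way.

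The implication (2)$\Rightarrow$(1) is the easy one. In the absolute case the majorant identity gives at once $\sum_{r}|\widehat{g}(r)c_r(a)|\le\sum_{d\mid a}d\sum_{m}|\widehat{g}(dm)\mu(m)|<\infty$. In the merely convergent case, fix $a$: for each of the finitely many $d\mid a$ one has $\lfloor N/d\rfloor\to\infty$, and $\sum_m\widehat{g}(dm)\mu(m)$ converges to $\gamma(d)/d$ by hypothesis, so every inner truncation converges, hence so does the finite sum, and $\sum_r\widehat{g}(r)c_r(a)=\sum_{d\mid a}\gamma(d)$.

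The substantial direction is (1)$\Rightarrow$(2), and the main obstacle is that no identity runs the other way: cancellation inside $c_r(a)$ typically makes $|c_r(a)|$ far smaller than $\sum_{d\mid(r,a)}d$, so the convergence of the individual series $\sum_m\widehat{g}(dm)\mu(m)$ cannot simply be read off from that of $\sum_r|\widehat{g}(r)c_r(a)|$. For the convergent case I would use strong induction on $a$, so that all proper divisors of $a$ are already handled: for $a=1$ the partial-sum identity reads $\sum_{r\le N}\widehat{g}(r)c_r(1)=\sum_{r\le N}\widehat{g}(r)\mu(r)$, which converges by (1); for the step, rearrange the identity to
\begin{equation*}
a\sum_{m\le N/a}\widehat{g}(am)\,\mu(m)=\sum_{r=1}^{N}\widehat{g}(r)c_r(a)-\sum_{\substack{d\mid a\\ d<a}}d\sum_{m\le N/d}\widehat{g}(dm)\,\mu(m),
\end{equation*}
where, as $N\to\infty$, the first term on the right converges by (1) and every remaining term converges by the inductive hypothesis (again using $\lfloor N/d\rfloor\to\infty$). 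Thus $\sum_{m\le N/a}\widehat{g}(am)\mu(m)$ converges as $N\to\infty$, and since $\lfloor N/a\rfloor$ takes every nonnegative integer value as $N$ grows, the full series $\sum_m\widehat{g}(am)\mu(m)$ converges, which is (2). The one point to nail down carefully here is exactly this passage from a subsequence of partial sums to the whole sequence.

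The absolute case of (1)$\Rightarrow$(2) needs a different device, since a convergent series of nonnegative terms dominates each of its sub-sums. I would single out the terms with $r=am$, $(m,a)=1$: a short computation using $\phi=\mu*\mathrm{id}$ gives $c_{am}(a)=\phi(a)\mu(m)$ for such $r$, whence
\begin{equation*}
\phi(a)\sum_{\substack{m\ \text{squarefree}\\ (m,a)=1}}|\widehat{g}(am)|\ \le\ \sum_{r=1}^{\infty}|\widehat{g}(r)c_r(a)|\ <\ \infty .
\end{equation*}
To remove the coprimality restriction, factor a squarefree $m$ uniquely as $m=uv$ with $u\mid\mathrm{rad}(a)$ and $v$ squarefree, $(v,a)=1$; then $\sum_{m\ \text{squarefree}}|\widehat{g}(am)|=\sum_{u\mid\mathrm{rad}(a)}\ \sum_{\substack{v\ \text{squarefree}\\(v,a)=1}}|\widehat{g}(auv)|$, and applying the displayed bound with $a$ replaced by each of the finitely many $au$ shows the right-hand side is finite. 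This gives $\sum_m|\widehat{g}(am)\mu(m)|<\infty$, i.e. (2), completing the proof.
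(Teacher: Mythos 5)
Your proof is correct, but note that the paper itself never proves Theorem \ref{lucht1}: it is quoted from Lucht, and the argument the paper actually supplies is for the Cohen--Ramanujan analogue, Theorem \ref{Equiv_conditions}, and only in the absolute-convergence reading. Both arguments start from $c_r(a)=\sum_{d\mid(r,a)}\mu(r/d)\,d$ and its M\"obius inversion, but they run differently. The paper computes $\mu*g$ and $u*\gamma$ directly, interchanging the infinite sum over $r$ with the finite divisor sum and using $\sum_{d\mid a}c_r(d)\mu(a/d)=a\,\mu(r/a)$ for $a\mid r$ (and $0$ otherwise); absolute convergence of the $\gamma$-series is then read off from the triangle inequality, $a\sum_m|\widehat{g}(am)\mu(m)|\le\sum_{d\mid a}\sum_r|\widehat{g}(r)c_r(d)|$. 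You work instead with truncations, $\sum_{r\le N}\widehat{g}(r)c_r(a)=\sum_{d\mid a}d\sum_{m\le N/d}\widehat{g}(dm)\mu(m)$, which is what lets you treat the merely convergent reading: (2)$\Rightarrow$(1) by letting $N\to\infty$, and (1)$\Rightarrow$(2) by strong induction on $a$ together with the (correctly flagged and valid) passage from the partial sums indexed by $\lfloor N/a\rfloor$ to the full sequence. That conditional case is not covered by the paper's proof, whose interchanges and bound are tailored to absolute convergence, so your version is more complete in exactly the respect demanded by the parenthetical ``(absolutely)'' in the statement. For the absolute case of (1)$\Rightarrow$(2) you use yet another device, the subseries $r=am$ with $(m,a)=1$ and $c_{am}(a)=\phi(a)\mu(m)$, followed by splitting squarefree $m$ as $uv$ with $u\mid\mathrm{rad}(a)$ and applying the bound at the points $au$; this is sound, though the paper's one-line inequality above is shorter and generalizes verbatim to $c_r^{s}(a^s)$. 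One small repair: your majorant $\sum_{r\le N}|\widehat{g}(r)c_r(a)|\le\sum_{d\mid a}d\sum_{m\le N/d}|\widehat{g}(dm)\mu(m)|$ should be justified by $|c_r(a)|\le\sum_{d\mid(r,a)}|\mu(r/d)|\,d$ rather than by $|c_r(a)|\le\sum_{d\mid(r,a)}d$, since the cruder bound only dominates $\sum_m|\widehat{g}(dm)|$, which hypothesis (2) does not control.
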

By this theorem, the Ramanujan coefficients can be computed to be $\widehat{g}(n)=\sum\limits_{\substack{a=1\\n|a}}^{\infty}\frac{(\mu*g)(a)}{a}$ where $\mu$ is the usual M\"{o}bius function.
In the same paper, he proved the following theorem to provide Ramanujan expansion to a class of additive functions.
\begin{theo}\label{lucht2}
Let $g \in \mathcal{A}$, the set of all additive arithmetical  functions. If the series $\sum\limits_{\substack{{v=1}}}^{\infty}\frac{g(p^v)}{p^v}$\text{ and } $\sum\limits_{\substack{{p}}}\sum\limits_{\substack{{v=1}}}^{\infty}\frac{g(p^v)}{p^v}$ converge then $g$ has a pointwise convergent Ramanujan expansion (\ref{ramsum}) with coefficients
\begin{align*}
\widehat{g}(p^\alpha) &= \frac{-g(p^{\alpha-1})}{p^{\alpha }}+ (1-\frac{1}{p})\sum\limits_{\substack{{v\geq \alpha}}}\frac{g(p^v)}{p^{v}}\\
\widehat{g}(1) &= \sum\limits_{\substack{{p}}}\widehat{g}(p)\\
\widehat{g}(n) &= 0, \text{ otherwise.}
\end{align*}

\end{theo}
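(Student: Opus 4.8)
The plan is to deduce the statement from Theorem \ref{lucht1} together with the formula $\widehat g(n)=\sum_{a:\,n\mid a}\frac{(\mu*g)(a)}{a}$ recorded just after it. First I would compute $\mu*g$ for an additive $g$. Since $g(1)=0$ and $g(n)=\sum_{p^{\alpha}\,\|\,n}g(p^{\alpha})$, one checks that the function $h$ which vanishes off $\{1\}\cup\{p^{v}\}$ and satisfies $h(1)=0$, $h(p^{v})=g(p^{v})-g(p^{v-1})$ has Dirichlet summatory function equal to $g$: indeed $\sum_{d\mid n}h(d)=\sum_{p^{\alpha}\,\|\,n}\sum_{v=1}^{\alpha}\left(g(p^{v})-g(p^{v-1})\right)=\sum_{p^{\alpha}\,\|\,n}g(p^{\alpha})=g(n)$ by telescoping. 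By M\"obius inversion $\mu*g=h$; thus $\mu*g$ is supported on $1$ and the prime powers, with $(\mu*g)(1)=0$ and $(\mu*g)(p^{v})=g(p^{v})-g(p^{v-1})$.

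Next I would read off the Ramanujan coefficients from $\widehat g(n)=\sum_{a:\,n\mid a}\frac{(\mu*g)(a)}{a}$. If $n$ has at least two distinct prime factors, so does every multiple of $n$, hence every term is zero and $\widehat g(n)=0$. If $n=p^{\alpha}$ with $\alpha\ge1$, only the multiples $a=p^{v}$ with $v\ge\alpha$ survive, so $\widehat g(p^{\alpha})=\sum_{v\ge\alpha}\frac{g(p^{v})-g(p^{v-1})}{p^{v}}$; the series $\sum_{v}\frac{g(p^{v})}{p^{v}}$ converges by the first hypothesis, so I may split this into two convergent series, reindex the second, and split off its $v=\alpha-1$ term to obtain $\widehat g(p^{\alpha})=-\frac{g(p^{\alpha-1})}{p^{\alpha}}+\left(1-\frac1p\right)\sum_{v\ge\alpha}\frac{g(p^{v})}{p^{v}}$; in particular $\widehat g(p)=\left(1-\frac1p\right)\sum_{v\ge1}\frac{g(p^{v})}{p^{v}}$ since $g(1)=0$. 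Finally, because $\mu*g$ is carried by the prime powers, $\widehat g(1)=\sum_{a}\frac{(\mu*g)(a)}{a}$ may be grouped prime by prime as $\sum_{p}\sum_{v\ge1}\frac{g(p^{v})-g(p^{v-1})}{p^{v}}=\sum_{p}\widehat g(p)$. Writing $A_{p}=\sum_{v\ge1}\frac{g(p^{v})}{p^{v}}$, the second hypothesis is that $\sum_{p}A_{p}$ converges, and $\sum_{p}\widehat g(p)=\sum_{p}A_{p}-\sum_{p}\frac{A_{p}}{p}$, where the last series converges by partial summation (the partial sums $\sum_{p\le x}A_{p}$ are bounded and $1/p\downarrow0$); so $\widehat g(1)$ is well defined and equals $\sum_{p}\widehat g(p)$. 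Thus the coefficients in the statement are precisely $\widehat g(n)=\sum_{n\mid a}(\mu*g)(a)/a$.

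It then remains to check condition (2) of Theorem \ref{lucht1} for this $\widehat g$, namely that $\gamma(a)=a\sum_{r\ge1}\widehat g(ar)\mu(r)$ converges for every $a\in\N$; the theorem then yields condition (1), the pointwise convergent expansion $g(a)=\sum_{r\ge1}\widehat g(r)c_{r}(a)$. I would argue by the shape of $a$: if $a$ has two or more distinct prime factors, then so does $ar$ for every $r$, so all terms vanish; if $a=p^{\alpha}$ with $\alpha\ge1$, then $\widehat g(ar)\neq0$ forces $ar$ to be a prime power, hence $r$ a power of $p$, while $\mu(r)\neq0$ forces $r\in\{1,p\}$, so the series reduces to the two-term sum $\widehat g(p^{\alpha})-\widehat g(p^{\alpha+1})$; and if $a=1$, then it is $\widehat g(1)+\sum_{p}\mu(p)\widehat g(p)=\widehat g(1)-\sum_{p}\widehat g(p)=0$, the convergence of $\sum_{p}\widehat g(p)$ having been shown above. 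Hence (2) holds, so the expansion converges pointwise; that its sum is $g$ follows because $\widehat g$ is the canonical coefficient sequence attached to $g$ --- alternatively, one can see it directly by inserting the values $c_{p^{\alpha}}(a)=\phi(p^{\alpha})$ if $p^{\alpha}\mid a$, $\,=-p^{\alpha-1}$ if $p^{\alpha-1}\,\|\,a$, and $=0$ otherwise, and summing the resulting finite expression prime by prime using the additivity of $g$.

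I expect the convolution identity and the case analysis to be routine; the main obstacle is the conditional-convergence bookkeeping --- verifying that the first hypothesis is exactly what makes each $\widehat g(p^{\alpha})$ well defined, that the second is exactly what makes $\widehat g(1)$ well defined and makes the $a=1$ instance of condition (2) converge, and that the prime-by-prime grouping of $\sum_{n\mid a}(\mu*g)(a)/a$ is legitimate (which it is, since $\mu*g$ is supported on prime powers). With these in hand, Theorem \ref{lucht1} supplies the conclusion.
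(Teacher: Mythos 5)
Your proposal is correct and follows essentially the same route as the paper's own argument (given there for the Cohen--Ramanujan analogue, the final theorem): compute $\mu*g$ on prime powers, check that $\gamma(a)=a\sum_m\widehat g(am)\mu(m)$ converges and equals $(\mu*g)(a)$ by the same case analysis $a=1$, $a=p^{\alpha}$, $a\notin\mathcal{P^*}\cup\{1\}$, and then invoke the equivalence theorem (Theorem \ref{lucht1}, resp.\ Theorem \ref{Equiv_conditions}). Your only departures are cosmetic and, if anything, tidier: you obtain $\mu*g$ by telescoping rather than the paper's binomial expansion, you derive the coefficient formulas from $\widehat g(n)=\sum_{n\mid a}(\mu*g)(a)/a$ instead of verifying them directly, and you justify the convergence of $\sum_p\widehat g(p)$ via Dirichlet's test, a point the paper passes over.
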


The key component of Ramanujan expansions is the Ramanujan Sum and it has been generalized in many ways. The aim of this paper is to study the Ramanujan type expansions using a generalization of the Ramanujan sum given by E.\ Cohen in \cite{cohen1949extension}. He defined the sum
\begin{align}\label{Coh1}
c_r^s(n)&=\sum\limits_{\substack{{(h,r^s)_s=1}\\h=1}}^{r^s}e^{\frac{2\pi i n h}{r^s}},
\end{align} where $(a,b)_s$ is the generalized gcd of $a$ and $b$  (see the definition in the next section). We call this henceforth as the Cohen-ramanujan sum. When $s=1$, this reduces to the Ramanujan sum.

We will call such expansions by the name Cohen-Ramanujan expansion. We provide expansions for two well known arithmetical functions using this generalization. We also provide some conditions for such expansions to exist following the method of arguments given by Lucht in \cite{lucht1995ramanujan} and \cite{lucht2010survey}. Infact, we will be proving two theorems analogous to theorem \ref{lucht1} and theorem \ref{lucht2} appearing in \cite{lucht1995ramanujan}. Our expansions and results, as in the case of  most of the existing results related to the usual Ramanujan sum, deal only with  pointwise convergence of such expansions.

We would like to remark that some other generalizations also exist for the Ramanujan sum. A few such generalizations were given by Cohen himself \cite{cohen1959trigonometric}, M. Sugunamma \cite{sugunamma1960eckford}, C. S. Venkataraman and Sivaramakrishnan \cite{venkataraman1972extension} and Chidambaraswamy \cite{chidambaraswamy1979generalized}.

 \section{Notations and basic results}
 Most of the notations, functions, and identities we mention in this paper are standard and can be found in \cite{tom1976introduction} or \cite{mccarthy2012introduction}. However for the sake of completeness, we restate some of them below. 
  
  For two arithmetical functions $f$ and $g$, $f*g$ denotes their Dirichlet convolution (Dirichlet product). Then the M{\"o}bius inversion formula states that $f(n)=\sum\limits_{d|n}g(d) \Longleftrightarrow g(n)=\sum\limits_{d|n}f(d)\mu\left(\frac{n}{d}\right)=f*\mu$.

        An arithmetical function $g$ is said to be additive if $g(mn)= g(m)+g(n)$ for coprime positive integers $m$ and $n$.
 $ \mathcal{A}$ denotes the set of all additive arithmetical functions. $\mathcal{P^*}$ denotes the set of all  prime powers $p^\alpha$ with $\alpha \in \mathbb{N}$.

  By $\xi_q^s(n)$, we mean the function $$\xi_q^s(n)= \begin{cases} q^s , \quad\text{ if } q^s\mid n\\
	0, \quad\text{ otherwise.}
	\end{cases}$$ It was proved by Cohen in \cite{cohen1949extension} that 
\begin{align}\label{ram-ident}
\sum\limits_{\substack{r|q}}c_r^{s}(n)=\xi_q^{(s)}(n).
\end{align} 

For $s\in\N$, the generalized GCD function $(a,b)_s$ gives the largest $d^s$ where $d\in\N$ such that $d^s|a$ and $d^s|b$. For $s>1$, a positive integer $m$ is $s-$power free  if no $p^s$ divides $m$, where $p $ is prime.
\begin{defi}
For $s \in \N$, $\tau_s(n)$ gives the number of $d^s\mid n$ where $d^s \in \N$. That is $\tau_s(n)= \sum\limits_{\substack{d^s \mid n\\d \in \N}}1$.
\end{defi}

 For $k,n \in \mathbb{N} $,  $\sigma_k(n) = \sum\limits_{\substack{{d \mid n}}}d^k$, the sum of $k$th powers of the divisors of $n$. 
\begin{defi}
Let $k,s \in \N$. The generalized sum of divisors function $\sigma_{k,s}(n)$ is given by $\sigma_{k,s}(n)=\sum\limits_{\substack{d^s \mid n\\d \in \N}}(d^s)^k$.
\end{defi} 
 Note that this function is different from $\sigma_{ks}$. But $\sigma_{k,1}=\sigma_{k}$.

Using the identity \begin{align}
                    c_r^{s}(n)=\sum\limits_{\substack{d|r\\d^s|n}}\mu(r/d)d^s\label{eq:mu_crs}
                   \end{align}
                   given by Cohen in \cite{cohen1949extension}, we see that for fixed $s, n\in\mathbb{N}$, $c_r^{s}(n)$ is bounded since  $\vert c_r^{s}(n)\vert  \leq \sum\limits_{\substack{d\mid r\\d^s\mid n}}d^s \leq \sigma_{1,s}(n).$  

The unit function $u$ is an arithmetical function such that $u(n)=1$ for all $n$. By $\zeta(s)$, we mean the Riemann Zeta function. By \cite[Example 1,Theorem 11.5]{tom1976introduction}, we have
\begin{align}
 \sum\limits_{\substack{n=1}}^{\infty} \frac{\mu(n)}{n^s}=\frac{1}{\zeta(s)}\text { if }Re(s) > 1.\label{eq:mu_zeta}
 \end{align}

 \section{Main Results}
We begin with giving the Cohen-Ramanujan expansion of $\tau_s$. Here we use elementary number theoretic techniques to establish the result. 

\begin{theo}\label{div-exp}

For $s>1$, we have $\tau_s(n) = \zeta(s)$$ \sum\limits_{\substack{{r=1}}}^{\infty}\frac{c_r^{s}(n)}{r^s}$.
\end{theo}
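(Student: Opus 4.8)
The plan is to compute the series on the right directly by expanding $c_r^{s}(n)$ via Cohen's identity \eqref{eq:mu_crs}, $c_r^{s}(n)=\sum_{d\mid r,\;d^s\mid n}\mu(r/d)\,d^s$. Substituting and pulling the factor $r^{-s}$ inside gives
\[
\zeta(s)\sum_{r=1}^{\infty}\frac{c_r^{s}(n)}{r^s}
=\zeta(s)\sum_{r=1}^{\infty}\;\sum_{\substack{d\mid r\\ d^s\mid n}}\frac{\mu(r/d)\,d^s}{r^s}.
\]
I would then reindex the double sum over the pairs $(d,r)$ with $d\mid r$ and $d^s\mid n$ by writing $r=de$: the divisibility condition $d\mid r$ becomes vacuous, $d$ runs over the \emph{finite} set $D=\{d\in\N:\,d^s\mid n\}$, and $e$ runs over all of $\N$. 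Since $D$ is finite and, for $s>1$, $\sum_{d\in D}\sum_{e=1}^{\infty}|\mu(e)|\,d^s/(de)^s\le|D|\,\zeta(s)<\infty$, the rearrangement is justified by absolute convergence (equivalently, one can invoke the bound $|c_r^{s}(n)|\le\sigma_{1,s}(n)$ noted after \eqref{eq:mu_crs}, which together with $s>1$ gives $\sum_r|c_r^{s}(n)|/r^s<\infty$).

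Carrying this out and cancelling the factor $d^s$,
\[
\zeta(s)\sum_{r=1}^{\infty}\frac{c_r^{s}(n)}{r^s}
=\zeta(s)\sum_{\substack{d\in\N\\ d^s\mid n}}\;\sum_{e=1}^{\infty}\frac{\mu(e)\,d^s}{(de)^s}
=\zeta(s)\sum_{\substack{d\in\N\\ d^s\mid n}}\;\sum_{e=1}^{\infty}\frac{\mu(e)}{e^s}.
\]
Because $s>1$, i.e.\ $Re(s)>1$, identity \eqref{eq:mu_zeta} gives $\sum_{e=1}^{\infty}\mu(e)/e^s=1/\zeta(s)$, so the right-hand side collapses to $\sum_{d\in\N,\;d^s\mid n}1$, which is exactly $\tau_s(n)$ by definition. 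This establishes $\tau_s(n)=\zeta(s)\sum_{r=1}^{\infty}c_r^{s}(n)/r^s$.

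The manipulation is short; the only step calling for care is the interchange of the two summations, and this is precisely where the hypothesis $s>1$ is used — both to ensure that $\zeta(s)$ and $\sum_e\mu(e)/e^s$ are meaningful and to supply the absolute convergence that makes the rearrangement valid. For $s=1$ the argument degenerates, since $\zeta(1)$ diverges while $\sum_{d\mid n}\sum_{e}\mu(e)/e$ vanishes; this is consistent with $\tau_1=d$ admitting instead the logarithmic Ramanujan expansion recalled in the introduction. I do not anticipate any other obstacle.
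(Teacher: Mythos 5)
Your proposal is correct and follows essentially the same route as the paper: expand $c_r^{s}(n)$ via the identity \eqref{eq:mu_crs}, reindex the double sum with $r=de$, interchange the order of summation, and invoke \eqref{eq:mu_zeta} to produce the factor $1/\zeta(s)$, leaving $\tau_s(n)$. Your explicit justification of the interchange (finiteness of the set of $d$ with $d^s\mid n$ plus absolute convergence for $s>1$) is a slightly more careful presentation of the step the paper handles via the bound $\vert c_r^{s}(n)\vert\leq\sigma_{1,s}(n)$.
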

\begin{proof}
Using (\ref{eq:mu_crs}), we have $c_r^{s}(n) = \sum\limits_{\substack{d\mid r\\d^s\mid n}}\mu(\frac{r}{d})d^s=\sum\limits_{\substack{r=dq\\d^s\mid n}}\mu(q)d^s$.\\
Now consider the sum
\begin{align*}
\sum\limits_{\substack{{r=1}}}^{\infty}\frac{c_r^{s}(n)}{r^s}&=\sum\limits_{\substack{{r=1}}}^{\infty}\sum\limits_{\substack{r=dq\\d^s\mid n}}\frac{\mu(q)d^s}{d^s q^s} = \sum\limits_{\substack{{q=1}}}^{\infty}\sum\limits_{\substack{d^s\mid n}}\frac{\mu(q)}{ q^s}
	 =\sum\limits_{\substack{{q=1}}}^{\infty}\frac{\mu(q)}{ q^s}\sum\limits_{\substack{d^s\mid n}}1
\end{align*}
and it is nothing but $\frac{1}{\zeta(s)} \tau_s(n)$ by identity (\ref{eq:mu_zeta}).
Thus $\tau_s(n) = \zeta(s) \sum\limits_{\substack{{r=1}}}^{\infty}\frac{c_r^{s}(n)}{r^s}$. Since $s>1$ and  $\vert c_r^{s}(n)\vert \leq \sigma_{1,s}(n)$, the sum $ \sum\limits_{\substack{{r=1}}}^{\infty}\frac{c_r^{s}(n)}{r^s}$ converges absolutely.
\end{proof}

\begin{note}
In \cite{ramanujan1918certain}, Ramanujan showed that,  $d(n)=\sum\limits_{\substack{{r=1}}}^{\infty} \frac{\log r}{r}c_r(n)$. Though $\tau_s$ becomes $d$ when $s=1$, the above result cannot be reduced to the Ramanujan's result because  in our case above, we require $s$ to be greater than 1.
\end{note}
 We derive the following Cohen-Ramanujan expansion for $\sigma_{ks}$.

\begin{theo} For $k, s\geq 1$,$\frac{\sigma_{ks}(n)}{n^{ks}} =\zeta((k+1)s) \sum\limits_{\substack{{r=1}}}^{\infty}\frac{c_r^{s}(n^s)}{r^{(k+1)s}}$.
\end{theo}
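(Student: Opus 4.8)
The plan is to mimic the proof of Theorem \ref{div-exp}, replacing the identity \eqref{eq:mu_zeta} for $1/\zeta(s)$ by the Dirichlet series $\sum_{q=1}^\infty \mu(q)/q^{(k+1)s} = 1/\zeta((k+1)s)$, which converges since $(k+1)s > 1$. First I would expand the Cohen-Ramanujan sum evaluated at $n^s$ using \eqref{eq:mu_crs}: $c_r^s(n^s) = \sum_{d\mid r,\, d^s\mid n^s} \mu(r/d)\, d^s$. The point here is that $d^s \mid n^s$ is equivalent to $d \mid n$, so the inner condition simplifies considerably, and writing $r = dq$ gives $c_r^s(n^s) = \sum_{d\mid n} \mu(q)\, d^s$ with $r=dq$.

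Next I would substitute this into the series and interchange the order of summation (justified at the end by absolute convergence). With $r = dq$ we have $r^{(k+1)s} = d^{(k+1)s} q^{(k+1)s}$, so
\begin{align*}
\sum_{r=1}^\infty \frac{c_r^s(n^s)}{r^{(k+1)s}} = \sum_{d\mid n} \sum_{q=1}^\infty \frac{\mu(q)\, d^s}{d^{(k+1)s} q^{(k+1)s}} = \left(\sum_{d\mid n} \frac{1}{d^{ks}}\right)\left(\sum_{q=1}^\infty \frac{\mu(q)}{q^{(k+1)s}}\right).
\end{align*}
The first factor is $\sum_{d\mid n} d^{-ks}$; I would recognize this as $\sigma_{ks}(n)/n^{ks}$ via the standard substitution $d \mapsto n/d$ over the divisors of $n$. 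The second factor is $1/\zeta((k+1)s)$ by \eqref{eq:mu_zeta}. Multiplying through by $\zeta((k+1)s)$ yields the claimed identity $\sigma_{ks}(n)/n^{ks} = \zeta((k+1)s) \sum_{r=1}^\infty c_r^s(n^s)/r^{(k+1)s}$.

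Finally I would address convergence. Since $\lvert c_r^s(n^s)\rvert \le \sigma_{1,s}(n^s)$ is bounded independently of $r$ (by the remark following \eqref{eq:mu_crs}), and $\sum_r r^{-(k+1)s}$ converges for $k,s \ge 1$ (as $(k+1)s \ge 2 > 1$), the series converges absolutely; this also retroactively justifies the rearrangement of the double sum. I do not anticipate a serious obstacle: the only subtlety is the reduction $d^s \mid n^s \iff d \mid n$, which is immediate from unique factorization, and the bookkeeping of exponents when splitting $r = dq$. One caveat worth noting in the writeup is that when $k = s = 1$ this recovers (a variant of) Ramanujan's classical expansion $\sigma(n)/n = \zeta(2)\sum_r c_r(n)/r^2$, so unlike Theorem \ref{div-exp} this result genuinely generalizes the corresponding Ramanujan identity.
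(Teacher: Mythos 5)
Your proof is correct, but it takes a different route from the paper's. You evaluate the series directly: expand $c_r^s(n^s)$ by the M\"obius-type formula (\ref{eq:mu_crs}), use $d^s\mid n^s \Longleftrightarrow d\mid n$, split $r=dq$, and factor the double sum into $\bigl(\sum_{d\mid n} d^{-ks}\bigr)\bigl(\sum_q \mu(q)q^{-(k+1)s}\bigr)$, identifying the second factor as $1/\zeta((k+1)s)$ via (\ref{eq:mu_zeta}); this is exactly the strategy the paper uses for Theorem \ref{div-exp}, transplanted to $\sigma_{ks}$. The paper instead argues in the opposite direction: it starts from $\sigma_{ks}(n)/n^{ks}=\sum_{q^s\mid n^s}q^{-ks}$, rewrites this as $\sum_{q=1}^{\infty} q^{-(k+1)s}\xi_q^{(s)}(n^s)$, replaces $\xi_q^{(s)}(n^s)$ by $\sum_{r\mid q}c_r^s(n^s)$ using (\ref{ram-ident}), and then rearranges $\sum_q\sum_{r\mid q}$ into $\sum_r\sum_m$ with $q=rm$ so that the factor $\zeta((k+1)s)$ appears as $\sum_m m^{-(k+1)s}$, closer to Ramanujan's original derivation and never invoking $\mu$. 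The two arguments are essentially dual (your rearrangement is over pairs $(d,q)$ with $d\mid n$ finite, the paper's over pairs $(r,m)$), and both rest on the same absolute convergence; your justification is sound, noting only that what you really need is absolute convergence of the double sum over $(d,q)$, which is immediate since $\sum_{d\mid n}d^{-ks}\sum_q q^{-(k+1)s}<\infty$. Your route has the advantage of making the coefficient $1/\zeta((k+1)s)$ appear for the reason predicted by Theorem \ref{Equiv_conditions} (via $\mu$), and your convergence remark is actually cleaner than the paper's, which asserts convergence ``since $s>1$'' although only $(k+1)s\geq 2>1$ is needed, as you say. Your closing observation that $s=1$ recovers Ramanujan's expansion of $\sigma_k(n)/n^k$ coincides with the Note following the theorem in the paper.
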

\begin{proof}

\begin{align*}
\text{We have } \frac{\sigma_{ks}(n)}{n^{ks}}
= \frac{\sum\limits_{\substack{{d\mid n}}}d^{ks}}{n^{ks}}
= \sum\limits_{\substack{{n=dq}}}\frac{(\frac{n}{q})^{ks}}{n^{ks}}&= \sum\limits_{\substack{{q^s\mid n^s}}}\frac{1}{q^{ks}}
= \sum\limits_{\substack{{q=1}}}^{\infty}\frac{1}{q^{ks}}\frac{1}{q^{s}}\xi_q^{(s)}(n^s).
\end{align*}
Now by equation (\ref{ram-ident}), 
\begin{align*}
\frac{\sigma_{ks}(n)}{n^{ks}}=\sum\limits_{\substack{{q=1}}}^{\infty}\frac{1}{q^{(k+1)s}}\sum\limits_{\substack{r \mid q}}c_r^{s}(n^s)
&= \sum\limits_{\substack{{q=1}}}^{\infty}\frac{1}{q^{(k+1)s}}\sum\limits_{\substack{ q=rm}}c_r^{s}(n^s)\\&= \sum\limits_{\substack{{r=1}}}^{\infty}\sum\limits_{\substack{{m=1}}}^{\infty}\frac{1}{r^{(k+1)s}m^{(k+1)s}}c_r^{s}(n^s)\\&= \sum\limits_{\substack{{m=1}}}^{\infty}\frac{1}{m^{(k+1)s}}\sum\limits_{\substack{{r=1}}}^{\infty}\frac{c_r^{s}(n^s)}{r^{(k+1)s}} \\&= \zeta((k+1)s)\sum\limits_{\substack{{r=1}}}^{\infty}\frac{c_r^{s}(n^s)}{r^{(k+1)s}}.
\end{align*}
The above sum converges absolutely since $s>1$ and  $\vert c_r^{s}(n)\vert \leq \sigma_{1,s}(n)$.
\end{proof}
\begin{note}
Since $\sigma_{ks}(n)=\sum\limits_{\substack{{d\mid n}}}d^{ks} = \sum\limits_{\substack{{d^s\mid n^s}}}(d^s)^k=\sigma_{k,s}(n^s) ,$ the above gives an expansion for $\frac{\sigma_{k,s}(n^s)}{n^{ks}}$ also.
\end{note}

\begin{note}
 If $n=m^sn_1$ where $n_1$ is an $s$-power  free positive integer, then  $\sigma_{k,s}(n)=\sigma_{k,s}(m^s)$. Hence $\sigma_{k,s}$ depends only on the $s$-power part in its argument.
\end{note}

\begin{note}
When $s=1$, the above reduces to the expansion \\$\frac{\sigma_k(n)}{n^k}=\zeta(k+1)\sum\limits_{\substack{{r=1}}}^{\infty}\frac{c_r^{}(n)}{r^{k+1}}$ given by Ramanujan in \cite{ramanujan1918certain}.
\end{note}
Our next result is crucial in establishing the existence of the Cohen-Ramanujan expansions for certain class of additive functions.

\begin{theo}\label{Equiv_conditions}
Let $g : \mathbb{N}\rightarrow \mathbb{C}$ be an arbitrary arithmetical function. Then the following are equivalent.\\
$(i) g(a) = \sum\limits_{\substack{{r=1}}}^{\infty} \widehat{g}(r) c_r^{s}(a^s)$ converges absolutely for every $a \in \mathbb{N}$. \\
$(ii) \gamma(a) = a^s \sum\limits_{\substack{{m=1}}}^{\infty} \widehat{g}(am) \mu(m)$ converges absolutely for every $a \in \mathbb{N}$.\\
In case of convergence, $\gamma= \mu * g$.
\end{theo}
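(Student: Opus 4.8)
Following the pattern of Theorem~\ref{lucht1}, the plan is to treat this as a Möbius–duality statement resting on the single identity $c_r^s(a^s)=\sum_{d\mid(r,a)}\mu(r/d)d^s$, which I get from (\ref{eq:mu_crs}) together with the elementary fact that $d^s\mid a^s$ iff $d\mid a$. Granting this, a formal regrouping of the double series — for each $r$ collect the pairs $(d,m)$ with $d\mid a$ and $dm=r$, so that $d$ runs over the divisors of $\gcd(r,a)$ and $m=r/d$ — yields $\sum_{r\ge1}\widehat g(r)c_r^s(a^s)=\sum_{d\mid a}d^s\sum_{m\ge1}\widehat g(dm)\mu(m)=\sum_{d\mid a}\gamma(d)$. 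So once both sides are known to converge absolutely, $g=u*\gamma$, and the Möbius inversion formula recalled in Section~2 gives $\gamma=\mu*g$. The theorem therefore reduces to showing that the two absolute-convergence hypotheses are equivalent.

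For $(ii)\Rightarrow(i)$ I would run the same regrouping on non-negative terms: from $|c_r^s(a^s)|\le\sum_{d\mid(r,a)}d^s|\mu(r/d)|$ one gets $\sum_{r\ge1}|\widehat g(r)|\,|c_r^s(a^s)|\le\sum_{d\mid a}d^s\sum_{m\ge1}|\widehat g(dm)|\,|\mu(m)|$, and each inner sum is finite — it is the absolute version of the series defining $\gamma(d)$, finite by $(ii)$ with $a$ replaced by $d$ — while $\{d:d\mid a\}$ is finite. Hence $(i)$ holds, the double series is absolutely convergent, the regrouping of the first paragraph is legitimate, and $\gamma=\mu*g$ follows. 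The one thing to be careful about here is that the factor $|\mu(m)|$ must be retained: without it one would need $\sum_m|\widehat g(dm)|<\infty$, which $(ii)$ does not provide.

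The substantive direction is $(i)\Rightarrow(ii)$, i.e.\ deducing $\sum_{m\text{ squarefree}}|\widehat g(am)|<\infty$ from $(i)$. The key point is a lower bound on the relevant subseries: if $m$ is squarefree with $(m,a)=1$, then $\gcd(am,a)=a$ and $\mu\big((a/d)m\big)=\mu(a/d)\mu(m)$ for $d\mid a$, so the identity of the first paragraph collapses to $c_{am}^s(a^s)=\mu(m)\,J_s(a)$, where $J_s(a):=\sum_{d\mid a}\mu(a/d)d^s=a^s\prod_{p\mid a}(1-p^{-s})\ne0$. Consequently $J_s(a)\sum_{\substack{m\text{ squarefree}\\(m,a)=1}}|\widehat g(am)|\le\sum_{r\ge1}|\widehat g(r)|\,|c_r^s(a^s)|<\infty$ by $(i)$, so this restricted sum is finite for every $a$. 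To drop the coprimality condition I would write an arbitrary squarefree $m$ uniquely as $m=e\,m'$ with $e:=\gcd(m,\mathrm{rad}(a))\mid\mathrm{rad}(a)$ and $(m',a)=1$; since $\mathrm{rad}(ae)=\mathrm{rad}(a)$ we have $(m',ae)=1$, so applying the bound just obtained with $a$ replaced by $ae$ and summing over the finitely many $e\mid\mathrm{rad}(a)$ gives $\sum_{m\text{ squarefree}}|\widehat g(am)|<\infty$ for every $a$, which is exactly $(ii)$; the relation $\gamma=\mu*g$ then comes from the regrouping, now justified.

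I expect the only real obstacle to be this last direction — in particular the recognition that one cannot bound $\sum_r|\widehat g(r)|\,|c_r^s(a^s)|$ from below term by term, but must single out the indices $r=am$ with $m$ squarefree and coprime to $a$, on which $|c_{am}^s(a^s)|$ equals the fixed nonzero number $J_s(a)$, and then recover the remaining squarefree $m$ through a finite decomposition. Everything else is routine manipulation of absolutely convergent (double) series together with the Cohen identity (\ref{eq:mu_crs}) and Möbius inversion.
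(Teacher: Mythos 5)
Your proof is correct, and your direction $(ii)\Rightarrow(i)$ is essentially the paper's: regroup the absolutely convergent double series $\sum_{d\mid a}d^s\sum_{m}\widehat g(dm)\mu(m)$ according to $r=dm$, keeping the factor $|\mu(m)|$ in the estimate. Your direction $(i)\Rightarrow(ii)$, however, is genuinely different. The paper dualizes the divisor identity $c_r^{s}(a^s)=\sum_{d\mid a}\mu(r/d)\,\xi_d^{(s)}(r^s)$ by M\"obius inversion to obtain $\sum_{d\mid a}c_r^{s}(d^s)\mu(a/d)=a^s\mu(r/a)$ if $a\mid r$ and $0$ otherwise; substituting this into $\mu*g(a)=\sum_{d\mid a}\mu(a/d)\sum_{r}\widehat g(r)c_r^{s}(d^s)$ (a finite sum of absolutely convergent series, so the interchange is immediate) produces simultaneously the identity $\gamma=\mu*g$ and the estimate $a^s\sum_{m}|\widehat g(am)\mu(m)|\le\sum_{d\mid a}\sum_{r}|\widehat g(r)c_r^{s}(d^s)|$, i.e.\ absolute convergence of $\gamma(a)$ from hypothesis $(i)$ applied at the divisors $d$ of $a$. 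You instead prove the convergence first, via the explicit evaluation $c_{am}^{s}(a^s)=\mu(m)J_s(a)$ with $J_s(a)=\sum_{d\mid a}\mu(a/d)d^s\neq 0$ for squarefree $m$ coprime to $a$, which gives the subseries bound $J_s(a)\sum|\widehat g(am)|\le\sum_{r}|\widehat g(r)c_r^{s}(a^s)|$, and then remove the coprimality restriction by the finite decomposition over $e\mid\mathrm{rad}(a)$, invoking $(i)$ at the shifted moduli $ae$; the relation $\gamma=\mu*g$ is then recovered a posteriori from the regrouping, which at that stage is justified. Both routes are sound and of comparable length: the paper's inversion trick delivers $\gamma=\mu*g$ and the convergence estimate in one stroke, while your argument makes transparent exactly which coefficients (those $\widehat g(am)$ with $m$ squarefree) are controlled by the expansion and why, namely because $|c_{am}^{s}(a^s)|$ equals the fixed nonzero Jordan-totient value $J_s(a)$, at the cost of the extra radical decomposition.
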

\begin{proof}
We have \\$ c_r^{s}(a^s)= \sum\limits_{\substack{d\mid r\\d^s\mid a^s}}\mu(\frac{r}{d})d^s= \sum\limits_{\substack{d^s\mid r^s\\d\mid a}}\mu(\frac{r}{d})d^s= \sum\limits_{\substack{d\mid a}}\mu(\frac{r}{d})\xi_d^{(s)}(r^s)= \sum\limits_{\substack{d\mid a}} f(d),  
$
where $f(d)= \mu(\frac{r}{d})\xi_d^{(s)}(r^s).$

By  M{\"o}bius inversion,

 $\sum\limits_{\substack{d\mid a}} c_r^{s}(d^s) \mu(\frac{a}{d}) = f(a)= \mu(\frac{r}{a})\xi_a^{(s)}(r^s)= \begin{cases} a^s \mu(\frac{r}{a}), \quad\text{ if } a^s\mid r^s\\
	0, \quad\text{ otherwise.}
	\end{cases}\\$

Now we prove that $(i)\Rightarrow (ii)$. 

Suppose $g(a) = \sum\limits_{\substack{{r=1}}}^{\infty} \widehat{g}(r) c_r^{s}(a^s)$ converges absolutely for every $a \in \mathbb{N}$. Then
\begin{align*}
\mu*g(a) = \sum\limits_{\substack{d\mid a}} \mu(\frac{a}{d}) g(d) &= \sum\limits_{\substack{d\mid a}} \mu(\frac{a}{d}) \sum\limits_{\substack{{r=1}}}^{\infty} \widehat{g}(r) c_r^{s}(d^s)\\
&= \sum\limits_{\substack{{r=1}}}^{\infty} \widehat{g}(r) \sum\limits_{\substack{d\mid a}}   c_r^{s}(d^s) \mu(\frac{a}{d})\\
& = \sum\limits_{\substack{{r=1}\\{a^s\mid r^s}}}^{\infty} \widehat{g}(r) a^s \mu(\frac{r}{a})\\
&= \gamma(a) \text{ since }a^s|r^s \Longleftrightarrow a|r.
\end{align*}
From this, we get $\gamma(a) = a^s \sum\limits_{\substack{{m=1}}}^{\infty} \widehat{g}(am)  \mu(m)$ and 
\begin{align*}
a^s \sum\limits_{\substack{{m=1}}}^{\infty} |\widehat{g}(am)  \mu(m)| 
&\leq \sum\limits_{\substack{{r=1}}}^{\infty} \sum\limits_{\substack{d\mid a}}| \widehat{g}(r) \mu(\frac{a}{d}) c_r^{s}(d^s)| \leq \sum\limits_{\substack{d\mid a}} \sum\limits_{\substack{{r=1}}}^{\infty} | \widehat{g}(r) c_r^{s}(d^s)|
\end{align*}
which converges by the assumption. 
Thus $\gamma(a) = a^s \sum\limits_{\substack{{m=1}}}^{\infty} \widehat{g}(am) \mu(m)$ converges absolutely.

To prove that $(ii)\Rightarrow (i)$, suppose that $\gamma(a) = a^s \sum\limits_{\substack{{m=1}}}^{\infty} \widehat{g}(am) \mu(m)$ converges absolutely for every $a \in \mathbb{N}$.
\begin{align*}
\text{Now } u*\gamma(a)  = \sum\limits_{\substack{{d\mid a}}}\gamma(d) u(\frac{a}{d})
= \sum\limits_{\substack{{d\mid a}}}\gamma(d)
&= \sum\limits_{\substack{{d\mid a}}} d^s \sum\limits_{\substack{{m=1}}}^{\infty} \widehat{g}(dm) \mu(m)\\
&= \sum\limits_{\substack{d\mid a\\d\mid r}} d^s \sum\limits_{\substack{{r=1}}}^{\infty} \widehat{g}(r) \mu(\frac{r}{d})\\
&=  \sum\limits_{\substack{{r=1}}}^{\infty} \widehat{g}(r) \sum\limits_{\substack{d\mid a\\d\mid r}} d^s   \mu(\frac{r}{d})\\
&=  \sum\limits_{\substack{{r=1}}}^{\infty} \widehat{g}(r) \sum\limits_{\substack{d^s\mid a^s\\d\mid r}} d^s   \mu(\frac{r}{d})\\
&=  \sum\limits_{\substack{{r=1}}}^{\infty} \widehat{g}(r) c_r^{s}(a^s)
= g(a).
\end{align*}
That is 
\begin{align*}
 g(a)&= \sum\limits_{\substack{{r=1}}}^{\infty} \widehat{g}(r) c_r^{s}(a^s)= \sum\limits_{\substack{{d\mid a}}} d^s \sum\limits_{\substack{{m=1}}}^{\infty} \widehat{g}(dm) \mu(m)
\end{align*}
and it converges absolutely.
\end{proof}

Let us see how to use the above theorem to deal with $\tau_s$.
\begin{exam}
Let  $g(a) = \frac{\tau_s(a^s)}{\zeta(s)} $. By abuse of notation, let $\widehat{g}(r)= \frac{1}{r^s}$. Then
\begin{align*}
\gamma(a) &= a^s \sum\limits_{\substack{{m=1}}}^{\infty} \widehat{g}(am) \mu(m)\\
&=  a^s \sum\limits_{\substack{{m=1}}}^{\infty} \frac{1}{(am)^{s}} \mu(m)\\
&=  \sum\limits_{\substack{{m=1}}}^{\infty} \frac{\mu(m)}{m^{s}}\\
&=  \frac{1}{\zeta(s)} \text{ (by identity }(\ref{eq:mu_zeta}))
\end{align*}
and so $\gamma(a)$ exists. By Theorem \ref{Equiv_conditions}, $ g(a) = \sum\limits_{\substack{{r=1}}}^{\infty} \widehat{g}(r) c_r^{s}(a^s)$ converges absolutely. So 
$\sum\limits_{\substack{{r=1}}}^{\infty}\frac{1}{r^s}c_r^{s}(a^s)=\frac{\tau_s(a^s)}{\zeta(s)}$, giving an expansion for $\tau_s$. This expansion is in agreement with what we proved in Theorem \ref{div-exp}. 
\end{exam}
The same way we can get an expansion for $\sigma_{ks}$.
\begin{exam}
Let $g(a) = \frac{\sigma_{ks}(a)}{a^{ks}} $ and (once again, by abuse of notation) let $\widehat{g}(r)=\frac{\zeta(k+1)}{r^{(k+1)s}}$. Then
\begin{align*}
\gamma(a) &= a^s \sum\limits_{\substack{{m=1}}}^{\infty} \widehat{g}(am) \mu(m)\\
&=  a^s \sum\limits_{\substack{{m=1}}}^{\infty} \frac{\zeta(k+1)}{(am)^{(k+1)s}} \mu(m)\\
&=  \frac{\zeta(k+1)}{(a)^{ks}} \sum\limits_{\substack{{m=1}}}^{\infty}  \frac{\mu(m)}{m^{(k+1)s}}\\
&=  \frac{\zeta(k+1)}{(a)^{ks}}  \frac{1}{\zeta((k+1)s)}  \text{ (by identity }(\ref{eq:mu_zeta})).
\end{align*}
Thus $\gamma(a)$ exists. By Theorem $\ref{Equiv_conditions}$, 
 $g(a) = \sum\limits_{\substack{{r=1}}}^{\infty} \widehat{g}(r) c_r^{s}(a^s)$ converges absolutely. Hence 
 $\sum\limits_{\substack{{r=1}}}^{\infty} \widehat{g}(r) c_r^{s}(a^s)=  \sum\limits_{\substack{{r=1}}}^{\infty}\frac{\zeta(k+1)}{r^{(k+1)s}}c_r^{s}(a^s)=\frac{\sigma_{ks}(a)}{a^{ks}} $ has an absolutely convergent expansion.
\end{exam}
Now we give a sufficient condition for the existence of Cohen-Ramanujan expansions of certain class of additive arithmetical functions.
\begin{theo}
Let $g \in \mathcal{A}$. If the series $\sum\limits_{\substack{{v=1}}}^{\infty}\frac{g(p^v)}{p^{vs}}$ and $\sum\limits_{\substack{{p}}} \sum\limits_{\substack{{v=1}}}^{\infty}\frac{g(p^v)}{p^{vs}}$ converge then $g$ has a pointwise convergent Cohen-Ramanujan expansion with coefficients
\begin{align*}
\widehat{g}(p^\alpha) &= \frac{-g(p^{\alpha-1})}{p^{\alpha s}}+ (1-\frac{1}{p^s})\sum\limits_{\substack{{v\geq \alpha}}}\frac{g(p^v)}{p^{vs}}\\
\widehat{g}(1) &= \sum\limits_{\substack{{p}}}\widehat{g}(p)\\
\widehat{g}(n) &= 0, \text{ otherwise.}
\end{align*}
\end{theo}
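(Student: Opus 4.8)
The plan is to derive this from Theorem~\ref{Equiv_conditions}. I would take $\widehat{g}$ as given by the three displayed formulas and check that condition $(ii)$ of that theorem holds, i.e.\ that $\gamma(a)=a^{s}\sum_{m=1}^{\infty}\widehat{g}(am)\mu(m)$ converges absolutely for every $a\in\mathbb{N}$, and simultaneously that $\gamma=\mu*g$. Then Theorem~\ref{Equiv_conditions} yields that $G(a):=\sum_{r=1}^{\infty}\widehat{g}(r)c_r^{s}(a^{s})$ converges absolutely with $\mu*G=\gamma=\mu*g$, whence $G=g$ on convolving with the unit function $u$ (the Dirichlet inverse of $\mu$); this is precisely the claimed Cohen-Ramanujan expansion. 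Before that I would record two preliminary facts. First, $g$ additive forces $g(1)=0$, and then $\mu*g$ is supported on $\{1\}\cup\mathcal{P^*}$, with $(\mu*g)(1)=0$ and $(\mu*g)(p^{\alpha})=g(p^{\alpha})-g(p^{\alpha-1})$ --- a standard computation for additive functions. Second, convergence of $\sum_{v\ge1}g(p^{v})p^{-vs}$ makes every tail $\sum_{v\ge\alpha}g(p^{v})p^{-vs}$ convergent, so each $\widehat{g}(p^{\alpha})$ is well defined; moreover, since $g(1)=0$ one gets the simplification $\widehat{g}(p)=(1-p^{-s})\sum_{v\ge1}g(p^{v})p^{-vs}$, so it is the hypothesis on $\sum_{p}\sum_{v\ge1}g(p^{v})p^{-vs}$ that makes $\widehat{g}(1)=\sum_{p}\widehat{g}(p)$ meaningful.

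The heart of the argument would be the evaluation of $\gamma(a)$, which splits into three cases governed by the support of $\widehat{g}$. If $a$ has two or more distinct prime divisors then so does $am$ for every $m$, hence $\widehat{g}(am)=0$ throughout and $\gamma(a)=0=(\mu*g)(a)$. If $a=p^{\alpha}$ with $\alpha\ge1$, then $\widehat{g}(p^{\alpha}m)\ne0$ forces $m$ to be a power of $p$, and since $\mu(p^{j})=0$ for $j\ge2$ only $m=1$ and $m=p$ survive, giving $\gamma(p^{\alpha})=p^{\alpha s}\bigl(\widehat{g}(p^{\alpha})-\widehat{g}(p^{\alpha+1})\bigr)$; inserting the formulas for $\widehat{g}(p^{\alpha})$ and $\widehat{g}(p^{\alpha+1})$ and using $\sum_{v\ge\alpha}g(p^{v})p^{-vs}-\sum_{v\ge\alpha+1}g(p^{v})p^{-vs}=g(p^{\alpha})p^{-\alpha s}$ collapses the bracket to $\bigl(g(p^{\alpha})-g(p^{\alpha-1})\bigr)p^{-\alpha s}$, so $\gamma(p^{\alpha})=g(p^{\alpha})-g(p^{\alpha-1})=(\mu*g)(p^{\alpha})$. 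Finally, for $a=1$ only $m=1$ and $m=p$ contribute, and $\gamma(1)=\widehat{g}(1)-\sum_{p}\widehat{g}(p)=0=(\mu*g)(1)$. Hence $\gamma=\mu*g$ identically; and in each case the defining series is a finite sum except at $a=1$, where it is a series over primes dominated by $\sum_{p}|\widehat{g}(p)|$, so the absolute convergence required in $(ii)$ reduces to showing $\sum_{p}|\widehat{g}(p)|<\infty$.

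I expect the routine but slightly fiddly part to be the prime-power computation, where the shifted tails $\sum_{v\ge\alpha}g(p^{v})p^{-vs}$ have to be tracked carefully when forming $\widehat{g}(p^{\alpha})-\widehat{g}(p^{\alpha+1})$. The genuinely delicate point --- and the one that forces the hypotheses to be used in full strength --- will be the absolute convergence at $a=1$: one must deduce $\sum_{p}|\widehat{g}(p)|=\sum_{p}(1-p^{-s})\bigl|\sum_{v\ge1}g(p^{v})p^{-vs}\bigr|<\infty$ from the assumed convergence of $\sum_{p}\sum_{v\ge1}g(p^{v})p^{-vs}$. Once this is secured, condition $(ii)$ of Theorem~\ref{Equiv_conditions} is verified, and that theorem delivers the absolutely --- hence pointwise --- convergent expansion $g(a)=\sum_{r=1}^{\infty}\widehat{g}(r)c_r^{s}(a^{s})$ with the stated coefficients.
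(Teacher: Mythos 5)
Your proposal follows essentially the same route as the paper's proof: take the stated coefficients, evaluate $\gamma(a)=a^{s}\sum_{m\ge1}\widehat{g}(am)\mu(m)$ in the three cases ($a=1$, $a=p^{\alpha}$, and $a$ with at least two distinct prime factors), identify $\gamma=\mu*g$ via the standard computation of $\mu*g$ for additive $g$, and then invoke Theorem~\ref{Equiv_conditions} (with M\"obius inversion to pass from $\mu*G=\mu*g$ to $G=g$) to obtain the expansion. The single step you defer --- showing $\sum_{p}|\widehat{g}(p)|<\infty$ so that $\gamma(1)$ converges absolutely --- is not treated in the paper either (it simply asserts that $\gamma$ converges absolutely and that $\gamma(1)=\widehat{g}(1)-\sum_{p}\widehat{g}(p)=0$ ``by assumption''), so you are in fact more explicit than the paper about where the hypothesis on $\sum_{p}\sum_{v\ge1}g(p^{v})p^{-vs}$ must be used, and in both arguments this step really requires reading that hypothesis as convergence of $\sum_{p}\bigl|\sum_{v\ge1}g(p^{v})p^{-vs}\bigr|$.
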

\begin{proof}
First we prove the existence of the above Cohen-Ramanujan  coefficients. Since $\sum\limits_{\substack{{v=1}}}^{\infty}\frac{g(p^v)}{p^{vs}}$ converges,
$\widehat{g}(p^\alpha) = \frac{-g(p^{\alpha-1})}{p^{\alpha s}}+ (1-\frac{1}{p^s})\sum\limits_{\substack{{v\geq \alpha}}}\frac{g(p^v)}{p^{vs}}$ exists. Now, since $g$ is additive, $g(1)=0$ and so 
\begin{align*}
\widehat{g}(1) = \sum\limits_{\substack{{p}}}\widehat{g}(p)
&= \sum\limits_{\substack{{p}}}(\frac{-g(1)}{p^s}+ (1-\frac{1}{p^s})\sum\limits_{\substack{{v=1}}}^{\infty}\frac{g(p^v)}{p^{vs}})\text{, where $s\geq 1$ }\\
&= \sum\limits_{\substack{{p}}}(1-\frac{1}{p^s})\sum\limits_{\substack{{v=1}}}^{\infty}\frac{g(p^v)}{p^{vs}} \text{ exists.}
\end{align*}
Also $\widehat{g}(n)=0$ if $n \notin \mathbb{P}^*\cup \{1\}$ exists.\\
Consider the action of $\mu *g$ on prime powers.
\begin{align*}
\mu*g(p^\alpha) = \sum\limits_{\substack{{p^\alpha}}} \mu(d) g(\frac{p^\alpha}{d})
&= \mu(1) g(p^\alpha)+\mu(p)g(p^{\alpha-1})\\
&= g(p^\alpha)-g(p^{\alpha-1}).\\
\text{Since } g \text{ is an additive function, }\mu*g(1)&= \mu(1) g(1)\\
&= 0.
\end{align*}
Let $n = p_1^{r_1}p_2^{r_2}\cdots p_k^{r_k}$,  where $p_i$  are distinct primes. Then,

\begin{align*}
\mu*g(n) &= \sum\limits_{\substack{{d\mid n}}} \mu(d)g(\frac{n}{d})\\
&=\mu(1)g(p_1^{r_1}p_2^{r_2} \cdots p_k^{r_k})+\mu(p_1)g(\frac{p_1^{r_1}p_2^{r_2}\cdots p_k^{r_k}}{p_1})+\\& +\mu(p_2)g(\frac{p_1^{r_1}p_2^{r_2}\cdots p_k^{r_k}}{p_2})+\cdots +\mu(p_k)g(\frac{p_1^{r_1}p_2^{r_2}\cdots p_k^{r_k}}{p_k})\\&+\mu(p_1p_2)g(\frac{p_1^{r_1}p_2^{r_2}\cdots p_k^{r_k}}{p_1p_2})+\mu(p_1p_3)g(\frac{p_1^{r_1}p_2^{r_2}\cdots p_k^{r_k}}{p_1p_3})+\\&+\cdots +\mu(p_{k-1}p_k)g(\frac{p_1^{r_1}p_2^{r_2}\cdots p_k^{r_k}}{p_{k-1}p_k})+\cdots +\mu(p_1p_2\cdots p_k)g(\frac{p_1^{r_1}p_2^{r_2}p_3^{r_3}}{p_1p_2\cdots p_k})\\
&= \sum\limits_{\substack{{i = 1}}}^{k}\left(\binom{k-1}{0}g(p_i^{r^i})-\binom{k-1}{1}g(p_i^{r^i})+\cdots +(-1)^{k-1}\binom{k-1}{k-1}g(p_i^{r^i}) \right)-\\&
 \left(\sum\limits_{\substack{{i = 1}}}^{k}\binom{k-1}{0}g(p_i^{r_i-1})-\binom{k-1}{1}g(p_i^{r_i-1})+\cdots +(-1)^{k-1}\binom{k-1}{k-1}g(p_i^{r_i-1})\right)
\\&= \sum\limits_{\substack{{i = 1}}}^{k}(1+-1)^{k-1}g(p_i)-\sum\limits_{\substack{{i = 1}}}^{k}(1+-1)^{k-1}g(p_i^{r_i-1})
\\&=0.
\end{align*}
Thus $\mu*g(a) = \begin{cases} g(p^\alpha)-g(p^{\alpha-1}), \quad\text{ if } a = p^\alpha \in \mathbb{P}^*\\
	0, \quad\text{ otherwise.}
	\end{cases}$\\
	Now consider the sum $\gamma(a) = a^s \sum\limits_{\substack{{n=1}}}^{\infty} \widehat{g}(an) \mu(n)$.
\begin{align*}
\text{ Then, }\gamma(1) & = \sum\limits_{\substack{{n=1}}}^{\infty}\widehat{g}(n)\mu(n)\\
&= \widehat{g}(1)\mu(1)+\sum\limits_{\substack{{p}}}\widehat{g}(p)\mu(p)\\
&= \widehat{g}(1)-\sum\limits_{\substack{{p}}}\widehat{g}(p)\\
&=0, \text{ by assumption}.
\end{align*}
\begin{align*}
\gamma(p^{\alpha s}) & = p^{\alpha s}\sum\limits_{\substack{{n=1}}}^{\infty}\widehat{g}(p^\alpha n)\mu(n)\\
&= p^{\alpha s}\left(\widehat{g}(p^\alpha)\mu(1)+\widehat{g}(p^\alpha p)\mu(p)\right)\\
&= p^{\alpha s}\left(\widehat{g}(p^\alpha)-\widehat{g}(p^{\alpha+1 })\right)\\
&=p^{\alpha s}\left((\frac{-g(p^{\alpha-1})}{p^{\alpha s}}+ (1-\frac{1}{p^s})\sum\limits_{\substack{{v\geq \alpha}}}\frac{g(p^v)}{p^{vs}})-(\frac{-g(p^{\alpha})}{p^{(\alpha+1) s}}+ (1-\frac{1}{p^s})\sum\limits_{\substack{{v\geq \alpha+1}}}\frac{g(p^v)}{p^{vs}})\right)\\
&= p^{\alpha s}\left((\frac{-g(p^{\alpha-1})}{p^{\alpha s}}+ \frac{g(p^\alpha)}{p^{(\alpha+1)s}}+ (1-\frac{1}{p^s})\frac{g(p^\alpha)}{p^{\alpha s}})\right)\\
&= g(p^{\alpha})-g(p^{\alpha-1}).
\end{align*}
If $a\notin \mathbb{P}^*\cup\{1\}$, then $\widehat{g}(a) = 0$. 
Therefore $\gamma(a) = a^s \sum\limits_{\substack{{n=1}}}^{\infty} \widehat{g}(an) \mu(n) = 0$.\\
Hence $\gamma(a)=\mu*g(a)$, converges absolutely. By Theorem \ref{Equiv_conditions}, $g(a)$ converges absolutely with Cohen-Ramanujan coefficients $\widehat{g}(a)$.
\section{Further directions}
In addition to the expansions mentioned above,  in \cite{ramanujan1918certain}, Ramanujan derived expansion for the Euler totient function $\phi$. It is possible that using our techniques given above, we may get expansions for the Jordan totient function defined as $J_s(n) = n^s \prod\limits_{\substack{{p \mid n}}}(1-\frac{1}{p^s})$ and the Klee's function  defined as $\Phi_s(n) = n\prod\limits_{\substack{p^s\mid n\\p \text{ prime}}}(1-\frac{1}{p^s})$, which behave very much similar to the Euler totient function, but has some $s$th power in their closed form formulae to deal with. We further feel that our techniques  can be used to find such expansions using some other generalizations of the Ramanujan Sum and various other such type of sums.
\section{Acknowledgements}

The first author thanks the University Grants Commission of India for providing financial support for carrying out research work through their Senior Research Fellowship (SRF) scheme. The authors thank the reviewer for offering some insightful comments which made this paper more compact than what it was before.
\section{Data availability statement}
We hereby declare that data sharing not applicable to this article as no datasets were generated or analysed during the current study.

\end{proof}

\end{document}